\begin{document}

\newenvironment {proof}{{\noindent\bf Proof.}}{\hfill $\Box$ \medskip}

\newtheorem{theorem}{Theorem}[section]
\newtheorem{lemma}[theorem]{Lemma}
\newtheorem{condition}[theorem]{Condition}
\newtheorem{proposition}[theorem]{Proposition}
\newtheorem{remark}[theorem]{Remark}
\newtheorem{definition}[theorem]{Definition}
\newtheorem{hypothesis}[theorem]{Hypothesis}
\newtheorem{corollary}[theorem]{Corollary}
\newtheorem{example}[theorem]{Example}
\newtheorem{descript}[theorem]{Description}
\newtheorem{assumption}[theorem]{Assumption}

\newcommand{\ba}{\begin{align}}
\newcommand{\ea}{\end{align}}

\def\P{\mathbb{P}}
\def\R{\mathbb{R}}
\def\E{\mathbb{E}}
\def\N{\mathbb{N}}
\def\Z{\mathbb{Z}}

\renewcommand {\theequation}{\arabic{section}.\arabic{equation}}
\def \non{{\nonumber}}
\def \hat{\widehat}
\def \tilde{\widetilde}
\def \bar{\overline}

\def\ind{{\mathchoice {\rm 1\mskip-4mu l} {\rm 1\mskip-4mu l}
{\rm 1\mskip-4.5mu l} {\rm 1\mskip-5mu l}}}

\title{\Large\ { \bf Determining the long-term behavior of cell populations: A new procedure for detecting ergodicity in large stochastic reaction networks}}

\author{Ankit Gupta and Mustafa Khammash \\
Department of Biosystems Science and Engineering \\ ETH Zurich \\  Mattenstrasse 26 \\ 4058 Basel, Switzerland. 
}
\date{\today}

\maketitle
\begin{abstract}
A reaction network consists of a finite number of species, which interact through predefined reaction channels. Traditionally such networks were modeled deterministically, but it is now well-established that when reactant copy numbers are small, the random timing of the reactions create internal \emph{noise} that can significantly affect the macroscopic properties of the system. To understand the role of noise and quantify its effects, stochastic models are necessary. In the stochastic setting, the population is described by a probability distribution, which evolves according to a set of ordinary differential equations known as the Chemical Master Equation (CME). This set is infinite in most cases making the CME practically unsolvable. In many applications, it is important to determine if the solution of a CME has a globally attracting fixed point. This property is called \emph{ergodicity} and its presence leads to several important insights about the underlying dynamics. The goal of this paper is to present a simple procedure to verify ergodicity in stochastic reaction networks. We provide a set of simple linear-algebraic conditions which are sufficient for the network to be ergodic. In particular, our main condition can be cast as a \emph{Linear Feasibility Problem} (LFP) which is essentially the problem of determining the existence of a vector satisfying certain linear constraints. The inherent scalability of LFPs make our approach efficient, even for very large networks. We illustrate our procedure through an example from systems biology.
\end{abstract}

\noindent Keywords: Stochastic Systems; Markov Models; Reaction Networks; Stationarity; Ergodicity. 
\medskip

\setcounter{equation}{0}

\section{Introduction}

Reaction networks represent a modeling paradigm that is used in many biological disciplines, such as, systems biology, epidemiology, pharmacology and ecology. Such networks were traditionally studied by expressing the dynamics as a set of ordinary differential equations. However these deterministic formulations become inaccurate when the reactant copy numbers are \emph{small}. In this case, the  discrete nature of the interactions makes the dynamics inherently \emph{noisy} and this noise can have a significant impact on the macroscopic properties of the system (see \cite{Elowitz}). To account for this noise and study its effects, a stochastic formulation of the dynamics is necessary. The most common approach is to model the dynamics as a continuous-time Markov process whose states denote the current population size of the constituent species. Many recent articles use such stochastic models to understand the role of noise in various biological phenomena.

Even though stochastic models have become very popular, the tools for analyzing them are still lacking.
Most papers that use such models have to simulate several trajectories (using the \emph{Stochastic Simulation Algorithm} by Gillespie \cite{GP}, for example) in order to determine the relevant characteristics of the system. Simulation of trajectories can be computationally demanding, and since one can only simulate a finite number of trajectories for a finite amount of time, properties like long-term behaviour cannot be satisfactorily studied through such simulations. Our goal in this paper is to overcome this problem and provide a direct way to examine the long-term behaviour for the stochastic model, without relying on simulations. Specifically we check if the underlying Markov process is \emph{ergodic}, which is analogous to having a globally attracting fixed point in the deterministic setting.  An ergodic process has a unique stationary distribution, and in the long-run, the proportion of time spent by its trajectories in any set is equal to the stationary probability of that set (see \eqref{resultLLN}). Hence information about the whole population at stationarity can be obtained by observing just one trajectory for a long time. Such an insight can be used to leverage different experimental techniques such as \emph{flow-cytometry} and \emph{time-lapse microscopy}, for biological applications. Ergodicity also implies that certain moments of the underlying Markov process converge to their steady-state values with time (see \eqref{ergodicconvergenceoff}). This can be used to design biological controllers that steer the moments to specific steady state values.

The canonical example of an ergodic reaction network is the simple birth-death model in which a single chemical species $\mathbf{S}$ undergoes the following two reactions:
\begin{align*}
\emptyset \stackrel{\theta_1 }{\rightarrow} \mathbf{S} \stackrel{\theta_2 }{\rightarrow} \emptyset,
\end{align*}
where $\theta_1,\theta_2 >0$. For this network, the reaction dynamics is given by a Markov process $(X(t))_{t \geq 0}$ with state space $\N_0 = \{0,1,2,\dots\}$. At any time $t$, $X(t)$ is the number of molecules of species $\mathbf{S}$. If $X(t) = n$, then the next reaction occurs at time $(t+\tau)$, where $\tau$ is an exponentially distributed random variable with rate $(\theta_1+ \theta_2 n)$. At time $(t+\tau)$ the state jumps by $\pm 1$ with probabilities $p_{\pm}(n)$ given by 
\begin{align*}
 p_{+}(n)= \left( \frac{\theta_1}{ \theta_1 + \theta_2 n } \right)  \textnormal{ and }    p_{-}(n)= \left( \frac{\theta_2 n}{ \theta_1 + \theta_2 n } \right). 
\end{align*}
From these probabilities, two observations can be made. Firstly, the state space $\N_0$ is \emph{irreducible}, which means that there is a positive probability for reaching any state in $\N_0$ from any other state in $\N_0$, in a finite time. Secondly, if the current state $X(t) = n$ is large, the stochastic dynamics experiences a \emph{negative drift} in the sense that the next jump state is more likely to be below $n$ than above $n$. Establishing \emph{irreducibility} of the state space and checking the \emph{negative drift} conditions will be the two main steps in proving ergodicity for a general reaction network.

The approach we present in this paper, relies on some known results on stochastic processes and it involves checking simple linear-algebraic conditions. In particular, we would need to solve \emph{Linear Feasibility Problems} (LFPs) of the form :
\begin{equation}
\label{lfp:form}
\mathcal{F} = \left\{  v \in \R^n :  A v \leq b \textnormal{ and } A_{\textnormal{eq} } v = b_{\textnormal{eq} } \right\},
\end{equation}
for certain matrix-vector pairs $(A,b)$ and $(A_{\textnormal{eq} } , b_{ \textnormal{eq} } )$. We say that the LFP corresponding to set $\mathcal{F} $ has a solution if this set is non-empty. Many methods are available to efficiently solve LFPs in very high dimensions. Therefore our approach can be easily applied to very large networks.

This paper is organized as follows. In Section \ref{sec:prelim} we provide some mathematical background. Our main results are presented in Section \ref{sec:mainresults} and in Section \ref{sec:example} we illustrate our approach through an example.

{\bf Notation :} We now introduce some notation that will be used in the paper. Let $\R$, $\R_+$, $\Z$, $\N$ and $\N_{0}$ denote the sets of all reals, nonnegative reals, integers, positive integers and nonnegative integers respectively. For $v,w \in \R^n$ we say $v <w$ or $v \leq w$ if the corresponding inequality holds component-wise. The vectors of all zeros and all ones in $\R^n$ are denoted by $\bar{0}_n$ and $\bar{1}_n$ respectively. For any $v=(v_1,\dots,v_n) \in \R^n$ we define its support as $\textnormal{supp}(v) = \{ i =1,\dots,n  : v_i \neq 0 \}$. Let $M$ be a $m \times n$ matrix with real entries. We denote its rank by $\textnormal{Rank}(M)$. If $C_1,\dots,C_n$ are the columns of $M$ then for any $A \subset \R$, the set $\textnormal{Colspan}_A(M)$ stands for
\begin{align*}
\left\{x \in \R^m :  x = \sum_{i=1}^n a_i C_i \textnormal{ for some } a_1,\dots,a_n \in A\right\}.
\end{align*}
For any positive integer $n$, where $I_n$ is the $n \times n$ identity matrix.
While multiplying a matrix with a vector we always regard the vector as a column vector. 
\section{Preliminaries} \label{sec:prelim}

We start by formally defining the stochastic model of a reaction network. Consider a system containing molecules that belong to one of $d$ species $\mathbf{S}_1,\dots,\mathbf{S}_d$. We assume that the system is well-stirred and hence its state at any time can be described by a vector in 
$\mathbb{N}_{0}^d$, whose $i$-th component is the number of molecules of the $i$-th specie. The species interact through $K$ predefined reaction channels. For any $k = 1,\dots,K$, the $k$-th reaction has the form
\begin{align}
\label{formofthereaction}
\sum_{i=1}^d \nu_{ik} \mathbf{S}_i \longrightarrow \sum_{i=1}^d \nu'_{ik} \mathbf{S}_i,
\end{align} 
where $\nu_{ik}$ ($\nu'_{ik} $) denotes the number of molecules of species $\mathbf{S}_i$ that are consumed (produced) by reaction $k$.  Let $\nu_k$ and $\nu'_k$ be vectors in $\N^d_0$, given by  $\nu_{k} = (\nu_{1k},\dots, \nu_{dk} )$ and $\nu'_{k} = (\nu'_{1k},\dots, \nu'_{dk} ) $.
When the state of the system is $x = (x_1,\dots,x_d)$, the $k$-th reaction fires after a random time which is exponentially distributed with rate $\lambda_k(x)$ and it displaces the state by $(\nu'_{k} - \nu_{k})$. The functions $\lambda_1,\dots,\lambda_K$ are called the \emph{propensity} functions for the reaction network. We assume \emph{mass action kinetics} and hence each $\lambda_k$ is given by
\begin{align}
\label{massactionkinetics}
\lambda_k(x_1,\dots,x_d) = \theta_k \prod_{i =1}^d \frac{ x_i(x_i-1)\dots (x_i - \nu_{ik} +1 ) }{  \nu_{ik} ! },
\end{align}
where $\theta_k > 0$ is the rate constant for the $k$-th reaction. 

The property of ergodicity depends crucially on the choice of the state space $\mathcal{S}$ for the reaction dynamics. We will later discuss how it can be chosen appropriately. For now, let $\mathcal{S}$ be a non-empty subset of $\N^d_0$ which satisfies the following property : if $y \in \mathcal{S}$ and $\lambda_k(y) > 0$ for some $k=1,\dots,K$, then $y + \zeta_k \in \mathcal{S}$. This property ensures that if the reaction dynamics starts in $\mathcal{S}$ then it stays in $\mathcal{S}$ forever. Let $(X(t))_{ t \geq 0}$ be the Markov process representing the stochastic reaction dynamics with some initial state $X(0)$ in $\mathcal{S}$. 
For any $x,y \in \mathcal{S}$ let 
\begin{align}
\label{transition_probabilities}
p_x(t,y) = \P\left( X(t) = y \vert X(0) =x \right).
\end{align}
Hence $p_x(t,y) $ is the probability that the reaction dynamics starting at $x$ will be in state $y$ at time $t$. Defining $p_x(t,A) = \sum_{y \in A} p_x(t,y)$ for any $A \subset \mathcal{S}$, we can view $p_x(t)$ as a probability distribution over $\mathcal{S}$. The dynamics of $p_x(t)$ is given by the Chemical Master Equation (CME) which has the following form. For each $y \in \mathcal{S}$
\begin{align*}
\frac{ d p_{x}(t,y) }{dt} = & \sum_{k=1}^K  ( p_{x}(t, y -\zeta_k) \lambda_k(y - \zeta_k) -p_{x}(t, y ) \lambda_k(y) ) 
\end{align*} 
where $\zeta_k = \nu'_k - \nu_k$. Observe that this system consists of as many equations as the number of elements in $\mathcal{S}$, which is typically infinite and hence solving this system is nearly impossible.

Note that the CME essentially describes a dynamical system over the space of probability measures on $\mathcal{S}$. We are interested in knowing if this dynamical system has a globally attracting fixed point. Specifically, we would like to determine if there exists a probability distribution $\pi$ over $\mathcal{S}$ such that
\begin{align}
\label{ergodicconvergence}
\lim_{t \to \infty} \sup_{A \subset \mathcal{S}} \left| p_{x}(t,A) - \pi(A) \right| = 0  \textnormal{ for any }  x \in \mathcal{S}.
\end{align}
Let $(X(t))_{ t \geq 0}$ be the Markov process described before. Relation \eqref{ergodicconvergence} implies that for any $A \subset \mathcal{S}$, the probability of the event $\{ X(t) \in A \}$ converges to $\pi(A)$ as $t \to \infty$, irrespective of the initial state $X(0)$. 
This is same as saying that the reaction dynamics $(X(t))_{ t \geq 0}$ is \emph{ergodic} with $\pi$ as the unique stationary distribution. Ergodicity implies that for any real-valued function $f$ satisfying $\sum_{y \in  \mathcal{S} } |f(y)| \pi(y) < \infty$, we have
\begin{align}
\label{ergodicconvergenceoff}
\lim_{t \to \infty} \E( f(X(t)) ) &= \sum_{y \in  \mathcal{S} } f(y) \pi(y)
 \end{align}
 Moreover the following limit holds with probability $1$
 \begin{align}
\textnormal{ and } \label{resultLLN}
\lim_{t \to \infty} \frac{1}{t} \int_{0}^{t} f(X(s))ds &= \sum_{y \in \mathcal{S}} f(y) \pi(y).
\end{align}
 Relation \eqref{ergodicconvergenceoff} can be used to show that the moments of the reaction dynamics converge to their \emph{steady state} values as $t \to \infty$. Relation \eqref{resultLLN} is just the ergodic theorem for Markov processes (see \cite{Norris}) and it shows that the stationary distribution of the population can be inferred by observing a single trajectory of the underlying Markov process $(X(t))_{ t \geq 0}$ for a sufficiently long time. 

Recall the definition of $p_x(t,y)$ from \eqref{transition_probabilities}.  We say that a state $y \in \mathcal{S}$ is \emph{accessible} from another state $x\in \mathcal{S}$ if $p_x(t,y) > 0$ for some $t > 0$. For the reaction dynamics to be ergodic it is necessary that the state space $\mathcal{S}$ is \emph{irreducible}, which means that all the states in $\mathcal{S}$ are \emph{accessible} from each other. Assuming irreducibility, it follows from the work of \cite{Meyn1}, that ergodicity can be checked by showing the existence of a positive function $V $ on $\mathcal{S}$ such that $V(x) \to \infty$ as $\|x\| \to \infty$ and for some $c_1,c_2 >0$, the following holds for all $x \in \mathcal{S}$:
\begin{align}
\label{negativedriftcondition}
\sum_{k=1}^K \lambda_k(x) \left(  V(x +\nu'_k - \nu_k) -  V(x) \right) \leq c_1 - c_2 V(x).
\end{align}
 In fact, if such a function $V$ exists then the convergence in \eqref{ergodicconvergence} is exponentially fast. 
The left side of \eqref{negativedriftcondition} is the drift the process $( V(X(t)) )_{t \geq 0}$ experiences when $X(t) =x$. Relation \eqref{negativedriftcondition} implies that this drift is negative for large values of $\|x\|$. From now on, we will refer to \eqref{negativedriftcondition} as the \emph{negative drift} condition.

\section{Main Results} \label{sec:mainresults}

In this section we present our framework for checking ergodicity in stochastic reaction networks. Our first task is to select the \emph{right} state space $\mathcal{S}$, so as to ensure that it is irreducible under the reaction dynamics. The most common choice of $\mathcal{S}$ is $\N^d_0$, which corresponds to the situation where each species can have \emph{any} number of molecules with a positive probability. Of course, this will not be true if certain species satisfy a \emph{conservation} relationship which is preserved by all the reactions. For example, in some gene-expression networks (see \cite{Vilar}), the active and inactive states of genes are represented as different species, and hence their will be conserved throughout the dynamics. When conservation relationships are present between $d_c (< d)$ species, then by renaming species if necessary, one can often show that the state space of the form $\mathcal{S} = \N_0^{d-d_c} \times \mathcal{E}_c$ is irreducible, where $\mathcal{E}_c$ is a finite subset of $\N_0^{d_c}$.

Using some recent results from \cite{Craciun}, we show how irreducibility of $\mathcal{S}$ can be checked in Section \ref{sec:irred}. For convenience, we separate the two cases mentioned above, $\mathcal{S} =\N^d_0$ and $\mathcal{S} = \N_0^{d-d_c} \times \mathcal{E}_c$. Once irreducibility is established, ergodicity can be verified by checking a negative-drift condition of the form \eqref{negativedriftcondition}. This is done in Section \ref{sec:negdrift} using ideas that are developed in significantly greater detail in \cite{Ourpaper}.

\subsection{Checking irreducibility} \label{sec:irred}  

For the reaction network described in Section \ref{sec:prelim}, we define its \emph{structure} to be the set $\mathcal{R} = \{  ( \nu_k , \nu'_k ) : k =1,\dots,K\}$. This structure along with the vector of positive rate constants $\theta = (\theta_1,\dots,\theta_K)$ fully determine the stochastic reaction dynamics. Irreducibility is a \emph{structural} property in the sense that it only depends on the network structure ($\mathcal{R}$) and not on the rate constants $(\theta)$. To see this, define a relation between the states in $\mathcal{S}$ as follows : $x \stackrel{ \mathcal{R}  }{\longrightarrow} y $ if and only if $x \geq \nu_k $ and $ y = x+\nu'_k -\nu_k$ for some $k=1,\dots,K$. Let $\stackrel{ \mathcal{R}^{*}  }{\longrightarrow}$ be the \emph{transitive closure} of this relation. In other words, $x \stackrel{ \mathcal{R}^{*} }{\longrightarrow} y$ if and only if there exist states $z_1,z_2,\dots,z_{n-1}$ for some $n\geq 1$ such that 
\begin{align}
\label{chainrelationship}
x = z_0 \stackrel{ \mathcal{R}  }{\longrightarrow} z_1  \stackrel{ \mathcal{R}  }{\longrightarrow} z_2 \dots \stackrel{ \mathcal{R}  }{\longrightarrow} z_n \stackrel{ \mathcal{R}  }{\longrightarrow} z_n = y. 
\end{align}
For each $k =1,\dots,K$ let $n_k$ be the number of elements in the set $\{ i = 1,\dots,n :  z_i = z_{i-1} + \nu'_k -\nu_k \}$. Then $\sum_{k=1}^K n_k = n$ and
\begin{align}
\label{spanrelationship}
y =  x + \sum_{k=1}^K (\nu'_k - \nu_k) n_k.
\end{align}
Observe that the form of the function $\lambda_k$ (see \eqref{massactionkinetics}) implies that $\lambda_k(z) > 0$ is equivalent to the condition $z \geq \nu_k$. This shows that when the state is $z$, the reaction $k$ has a positive probability of firing if and only if $z \geq \nu_k$. Hence $p_{x}(t,y) >0$ for some $t>0$, if and only if $x \stackrel{ \mathcal{R}^{*}  }{\longrightarrow} y$. This proves our claim that irreducibility is a structural property.

Let $M$ be the $d \times K$ matrix whose $k$-th column is $(\nu'_k - \nu_k)$. Then $M$ is the \emph{stoichiometry matrix} for the reaction network with structure $\mathcal{R}$. Suppose there is a non-zero vector $\gamma \in \R^d_+$, in the left null-space of $M$ 
\begin{align}
\label{leftnullspacem}
\gamma^T M = \bar{0}_K.
\end{align}
In this case, $ \langle \gamma, X(t)  \rangle =  \langle \gamma, X(0)  \rangle$ for all $t \geq 0$, where $(    X(t)  )_{t \geq 0}$ is the Markov process representing the reaction dynamics. Therefore the species in the set $\{ \mathbf{S}_i  :  i \in \textnormal{supp}(\gamma)\}$ satisfy a conservation relation and $\mathcal{S} = \N_0^{d}$ cannot be irreducible.
Of course a non-zero $\gamma$ satisfying \eqref{leftnullspacem} cannot be present if $\textnormal{Rank}(M) = d$, and in this case we can expect $\mathcal{S} = \N_0^{d}$ to be irreducible. We consider this situation first and deal with the other situation later.

{\bf Networks with no conservation relations} : We now present sufficient conditions to check if $\mathcal{S} = \N^d_0$ is irreducible for the reaction network. We need to verify that for every $x,y \in \N^d_0$ we have $x \stackrel{ \mathcal{R}^{*} }{\longrightarrow} y$. From \eqref{spanrelationship} it is immediate that this can only be true if
\begin{align}
\label{colspancond1}
\textnormal{Colspan}_{\N_0}( M ) = \Z^d.
\end{align}
Checking \eqref{colspancond1} directly is computationally difficult. However \eqref{colspancond1} is equivalent to having $\textnormal{Colspan}_{\Z}( M )  = \Z^d $ and $ \textnormal{Colspan}_{\R_+}( M )  = \R^d$ (see Theorem 3.4 in \cite{Craciun}).
The first condition, $\textnormal{Colspan}_{\Z}( M )  = \Z^d$, can be easily checked by computing the \emph{Hermite normal form} (see \cite{Cohen}) of the matrix $M$. The Hermite normal form is an analogue of the row-reduced echelon form for integer matrices. Assuming $\textnormal{Rank}(M) = d$, it follows from Theorem 3.6 in \cite{Davis} that the second condition, $ \textnormal{Colspan}_{\R_+}( M )  = \R^d$, can be checked by showing that there exists a vector $v \in \R^K$ with strictly positive entries, satisfying $M v = \bar{0}_d$. Such a vector exists if and only if the LFP corresponding to 
\begin{align}
\label{lfp1}
\mathcal{F}_1= \left\{ v \in \R^{K} : M v = \bar{0}_d \textnormal{ and } v \geq \bar{1}_K \right\}
\end{align}
has a solution. Note that this LFP has the form \eqref{lfp:form} with $A= -I_K$, $b = -\bar{1}_K$, $A_{\textnormal{eq}} = M$ and $b_{\textnormal{eq}} = \bar{0}_d$.

Assuming \eqref{colspancond1}, the analysis in \cite{Craciun} shows that for some large positive vector $z_0 \in \N^d$ all the states in the region $\{z \in \N^d_0 : z \geq z_0\}$ are accessible from each other. Moreover to prove that $\N^d_0$ is irreducible we only have demonstrate that for some $x,x' \in \N^d$:
\begin{align}
\label{suff:condirred}
 \bar{0}_d \stackrel{ \mathcal{R}^{*}  }{\longrightarrow} x  \quad \textnormal{ and } \quad  x' \stackrel{ \mathcal{R}^{*}  }{\longrightarrow}  \bar{0}_d.
\end{align}
 For details, see Theorem 3.8 in \cite{Craciun}.

From now on let $\mathcal{D} =\{1,\dots,d\}$ be the set of species and $\mathcal{K} = \{1,\dots,K\}$ be the set of reactions. To prove the first accessibility relation in \eqref{suff:condirred}, we need to show that there exists a sequence of $n$ reactions $k_1,\dots, k_n \in \mathcal{K}$, such that the cumulative effect of all these $n$ reactions is positive for each species (that is, $\sum_{j=1}^n (\nu'_{k_j} - \nu_{k_j} ) > \bar{0}_d$) and each intermediate reaction $k_j$ has a positive probability of firing (that is, $\sum_{l=1}^{j-1} (\nu'_{k_l} - \nu_{k_l} ) \geq \nu_{k_j}$). Such a sequence of reactions is difficult to construct for general networks, but we now present a simple scheme that allows us to easily check if such a sequence exists for a large class of networks.

Our scheme is motivated by the observation that many biochemical reaction networks appear like \emph{complex cascades of birth-death networks}. By this we mean that in these networks, a certain set of species are produced \emph{constitutively} due to reactions of the form $\emptyset \longrightarrow {\bf S}_i$. These species then produce another set of species which in turn produce another set of species and so on. If all the species are produced this way then one can construct a sequence of reactions that proves the first accessibility relation in \eqref{suff:condirred}. To make this formal, we arrange the species into \emph{levels} according to the minimum number of reactions it takes for the species to be produced from nothing. 
Let $H_0 = \emptyset$ and for each $l=1,2,\dots$ define
\begin{align*}
G_l =& \{  i \in \mathcal{D} \backslash H_{l-1} :   \textnormal{supp}(\nu_k) \subset H_{l-1} \textnormal{ and } i \in \textnormal{supp}(\nu'_k)    \textnormal{ for some } k \in \mathcal{K} \} \quad  \textnormal{ and } \quad H_l = H_{l-1} \cup G_l,
\end{align*}
where $\mathcal{D} \backslash H_{l-1}  = \{ i \in \mathcal{D}  : i \notin   H_{l-1} \}$. The set $G_l$ contains all the species at level $l$ and the set $H_l$ contains all the species that belong to levels $1,\dots,l$. We say that a reaction network with structure $\mathcal{R}$ is \emph{exhaustive} if there exists a $l_0 \geq 1$ such that $H_{l_0} = \cup_{l=1}^{l_0} G_l = \mathcal{D} $.
The level construction allows us to prove the following.
\begin{lemma}
\label{lem:level} 
Suppose that a reaction network with structure $\mathcal{R}$ is exhaustive. Then there exists a $x \in \N^d$ such that $\bar{0}_d \stackrel{ \mathcal{R}^{*}  }{\longrightarrow} x$.
\end{lemma}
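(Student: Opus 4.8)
The plan is to prove, by induction on the level index $l$, a strengthened statement: for every $l \geq 0$ and every integer $N \geq 1$, there is a finite sequence of reactions, each enabled at the state where it is fired, leading from $\bar{0}_d$ to a state in which every species in $H_l$ has at least $N$ molecules. The lemma then follows immediately by taking $l = l_0$ (where $H_{l_0} = \mathcal{D}$ by exhaustiveness) and $N = 1$, since the resulting state lies in $\N^d$ and is reached from $\bar{0}_d$ via $\stackrel{\mathcal{R}^{*}}{\longrightarrow}$ by construction.

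For the base case $l = 1$, note that the reactions $k$ defining $G_1$ satisfy $\textnormal{supp}(\nu_k) \subset H_0 = \emptyset$, i.e. $\nu_k = \bar{0}_d$, so each such reaction is enabled at every state and consumes nothing. For each $i \in G_1$ I would pick one such reaction $k(i)$ with $i \in \textnormal{supp}(\nu'_{k(i)})$ and fire it $N$ times; since these source reactions only add molecules and never remove any, the cumulative effect yields at least $N$ copies of each species in $G_1 = H_1$.

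For the inductive step, assume the claim for $l-1$. Each $i \in G_l$ comes with a reaction $k(i)$ whose reactants lie in $H_{l-1}$ and which produces $i$; crucially, since $i \notin H_{l-1}$ we have $\nu_{i,k(i)} = 0$, so firing $k(i)$ strictly increases the count of species $i$. The idea is first to invoke the induction hypothesis to stockpile a very large number $M$ of each species in $H_{l-1}$, and then to fire each $k(i)$ at most $N$ times to create $\geq N$ copies of every species in $G_l$. Choosing $M$ larger than the total number of molecules of any $H_{l-1}$-species consumed during this firing phase, plus $N$, guarantees two things at once: every reaction $k(i)$ stays enabled (its reactant requirement $\nu_{k(i)}$, which involves only $H_{l-1}$-species, is always met), and every species in $H_{l-1}$ still has $\geq N$ copies at the end. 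Hence every species in $H_l = H_{l-1} \cup G_l$ finishes with at least $N$ copies.

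The main obstacle is precisely the enabling-versus-depletion bookkeeping in the inductive step: producing a level-$l$ species consumes level-$(l-1)$ species, which could in principle strand the process before all of $G_l$ is produced. The resolution is the strengthening of the induction hypothesis from \emph{reaching positive counts} to \emph{reaching arbitrarily large counts}; this lets me pre-load enough reserve of the lower levels that consumption never drops any count below what is needed, both to keep the reactions enabled and to preserve the lower-level guarantees. A minor point left to check is that firing $k(i)$ may also change the counts of other species (possibly in $H_{l-1}$ or in later levels), but since any such side effect either only adds molecules or is harmlessly absorbed into the reserve bound on $M$, it causes no difficulty.
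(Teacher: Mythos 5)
Your proof is correct and follows essentially the same route as the paper's: an induction on levels whose strengthened hypothesis is that arbitrarily large stockpiles of every species in $H_{l-1}$ can be reached from $\bar{0}_d$, which is then used to keep the level-$l$ producing reactions enabled throughout. The paper phrases this via a ``satisfiability'' property (for any $r$ supported on $H_l$ there is a reachable $x \geq r$) and the monotonicity $y \longrightarrow x \Rightarrow y+\delta \longrightarrow x+\delta$, while you make the same reserve-counting argument explicit, but the underlying idea is identical.
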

\begin{proof}
We prove this lemma by an induction argument. In this proof we denote the relation $\stackrel{ \mathcal{R}^{*}  }{\longrightarrow} $ by $\longrightarrow$. We say that a level $l$ is \emph{satisfiable} if for any $r \in \N^d_0$ with $\textnormal{supp}(r) \subset H_l$, we can find a state $x$ such that $x \geq r$ and $\bar{0}_d \longrightarrow x$. Certainly level $1$ is satisfiable, because $H_1 = G_1$ consists of those species that are produced from nothing. Suppose that level $(l-1)$ is satisfiable. Pick any $r \in \N^d_0$ with $\textnormal{supp}(r) \subset H_l$. We can write it as $r = r_1+r_2$ where $\textnormal{supp}(r_1) \subset H_{l-1}$ and $\textnormal{supp}(r_2) \subset G_l$. Note that molecules of species in $G_l$ are produced by consuming molecules of species in $H_{l-1}$. Hence we can find states $x,y$ with $x\geq r$, $y \geq r_1$ and $\textnormal{supp}(y) \subset H_{l-1}$ such that $ y \longrightarrow x$. Satisfiability of level $l-1$ implies that there exists a $\delta \in \N^d_0$ such that $\textnormal{supp}(\delta) \subset H_{l-1}$ and $\bar{0}_d \longrightarrow y + \delta$. But $ y \longrightarrow x$ implies that $ y +\delta \longrightarrow x +\delta$ and hence $\bar{0}_d \longrightarrow x + \delta$. This shows that level $l$ is satisfiable and by induction we can conclude that all the levels are satisfiable. Since $\mathcal{R}$ is exhaustive we can find a state $x$ with $\textnormal{supp}(x)  = \mathcal{D}$ such that $\bar{0}_d \longrightarrow x$. This completes the proof of the lemma.
\end{proof}

Using Lemma \ref{lem:level} we can check the first relation in \eqref{suff:condirred}. To check the second relation we consider a reaction network with the \emph{inverse} structure $\mathcal{R}_{ \textnormal{inv} } = \{  ( \nu'_k , \nu_k ) : k =1,\dots,K\} $, which is obtained by \emph{flipping} the arrows in \eqref{formofthereaction}. We can define the relation $\stackrel{ \mathcal{R}^{*}_{\textnormal{inv}}  }{\longrightarrow} $ for this network structure as above. Note that for any $x,y \in \N^d_0$, $x \stackrel{ \mathcal{R}^{*}_{\textnormal{inv}}  }{\longrightarrow} y$ holds if and only if $y \stackrel{ \mathcal{R}^{*}  }{\longrightarrow} x$ holds. Hence the second relation in \eqref{suff:condirred} can be checked by showing that $\bar{0}_d \stackrel{ \mathcal{R}^{*}_{\textnormal{inv}}  }{\longrightarrow} x' $ for some $x' \in \N^d$. This can again be done using Lemma \ref{lem:level} if the network with structure $ \mathcal{R}_{\textnormal{inv}}$ is exhaustive.

The above discussion gives us our main result for checking the irreducibility of $\mathcal{S} = \N^d_0$.
\begin{theorem}
\label{thm:irrednocons}
Suppose $\textnormal{Rank}(M) = d$, $\textnormal{Colspan}_{\Z}( M )  = \Z^d$ and the LFP corresponding to $\mathcal{F}_1$ (see \eqref{lfp1}) has a solution. Also assume that reaction networks with structures $\mathcal{R}$ and $\mathcal{R}_{ \textnormal{inv} }$ are exhaustive. Then the state space $\mathcal{S} = \N^d_0$ is irreducible under the reaction dynamics.
\end{theorem}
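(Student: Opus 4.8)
The plan is to assemble the ingredients already laid out before the statement, so that the proof reduces to verifying the hypotheses of the reachability criterion \eqref{suff:condirred} (Theorem 3.8 in \cite{Craciun}) and then feeding it Lemma \ref{lem:level}. First I would show that the three algebraic hypotheses together force $\textnormal{Colspan}_{\N_0}(M) = \Z^d$, which is exactly \eqref{colspancond1}. By the cited equivalence (Theorem 3.4 in \cite{Craciun}), \eqref{colspancond1} holds if and only if both $\textnormal{Colspan}_{\Z}(M) = \Z^d$ and $\textnormal{Colspan}_{\R_+}(M) = \R^d$. The first of these is assumed outright. For the second, the assumption $\textnormal{Rank}(M) = d$ lets me invoke Theorem 3.6 in \cite{Davis}: $\textnormal{Colspan}_{\R_+}(M) = \R^d$ is equivalent to the existence of a strictly positive $v \in \R^K$ with $M v = \bar{0}_d$, and such a $v$ exists precisely when the LFP for $\mathcal{F}_1$ (see \eqref{lfp1}) is feasible, which is the third hypothesis. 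Hence \eqref{colspancond1} holds.

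Next I would invoke the reduction provided by Theorem 3.8 in \cite{Craciun}: once \eqref{colspancond1} is established, irreducibility of $\N^d_0$ follows as soon as one exhibits $x, x' \in \N^d$ with $\bar{0}_d \stackrel{ \mathcal{R}^{*} }{\longrightarrow} x$ and $x' \stackrel{ \mathcal{R}^{*} }{\longrightarrow} \bar{0}_d$, as in \eqref{suff:condirred}. This is the place where the structural \emph{exhaustiveness} assumptions enter. The first relation is immediate from Lemma \ref{lem:level} applied to $\mathcal{R}$: since the network with structure $\mathcal{R}$ is exhaustive, there is a state $x \in \N^d$ reachable from $\bar{0}_d$.

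For the second relation I would pass to the inverse structure $\mathcal{R}_{\textnormal{inv}}$. Because $x' \stackrel{ \mathcal{R}^{*} }{\longrightarrow} \bar{0}_d$ holds if and only if $\bar{0}_d \stackrel{ \mathcal{R}^{*}_{\textnormal{inv}} }{\longrightarrow} x'$, and the network with structure $\mathcal{R}_{\textnormal{inv}}$ is assumed exhaustive, Lemma \ref{lem:level} applied now to $\mathcal{R}_{\textnormal{inv}}$ produces the required $x' \in \N^d$. Combining the two relations with the reduction from \cite{Craciun} yields that every pair of states in $\N^d_0$ is mutually accessible, that is, $\mathcal{S} = \N^d_0$ is irreducible under the reaction dynamics.

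As for the main obstacle: since essentially every quantitative step is an invocation of a previously established result, the substantive work has already been done in setting up the equivalence chain for \eqref{colspancond1} and in proving Lemma \ref{lem:level}. The one point deserving genuine care is the bookkeeping around the reduction itself, namely that reachability of a single positive state from the origin, together with its reverse, genuinely upgrades to accessibility between \emph{arbitrary} pairs of states via the mutually accessible region $\{z \in \N^d_0 : z \geq z_0\}$ furnished by \cite{Craciun}. Since this upgrade is precisely the content of the cited Theorem 3.8, I would cite it rather than reprove it, so that the proof stays short and its only subtlety is to state clearly which hypothesis feeds which cited result.
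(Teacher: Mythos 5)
Your proposal is correct and follows exactly the route the paper intends: it assembles \eqref{colspancond1} from the rank, $\Z$-span, and LFP hypotheses via the cited equivalences, invokes Theorem 3.8 of \cite{Craciun} to reduce irreducibility to \eqref{suff:condirred}, and discharges both accessibility relations with Lemma \ref{lem:level} applied to $\mathcal{R}$ and $\mathcal{R}_{\textnormal{inv}}$. The paper gives no separate proof beyond this same ``above discussion,'' so your write-up matches it in substance and structure.
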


{\bf Networks with conservation relations} : We now come to the situation where the reaction network has conservation relations. Each conservation relation corresponds to a non-zero vector $\gamma \in \R^d_+$ satisfying \eqref{leftnullspacem}. We assume that the network has only one conservation relation, but our method can be easily extended to cases where many conservation relations are present.

Let $\gamma$ be as above and suppose that $\textnormal{supp}(\gamma)$ contains $d_c$ elements, where $d_c<d$. Then the reaction network has $d_c$ conserved species, while the remaining $d_u = d - d_c$ species are \emph{unconserved}. By renaming species if necessary, we can assume that $\gamma = (0,\dots,0,\gamma_{d_u+1},\dots,\gamma_d )$, and hence the sets of conserved and unconserved species are given by $\mathcal{D}_c = \{d_u+1,\dots,d\}$ and $\mathcal{D}_u = \{1,\dots,d_u\}$ respectively. Let $\mathcal{E}_c$ be the finite subset of $\N^{d_c}_0 $ defined by
\begin{align*}
\mathcal{E}_c = \left\{ ( x_1,\dots, x_{d_c} ) \in \N^{d_c}_0  : \sum_{i=1}^{d_c} \gamma_{d_u+i} x_i = C \right\},
\end{align*}
where $C$ is some constant. For each $k \in \mathcal{K}$, let $\bar{\nu}_k \in \N^{d_u}_0$ and $\hat{\nu}_k \in \N^{d_c}_0$ be the vectors containing the first $d_u$ and the last $d_c$ components of $\nu_k$. The definitions of $\bar{\nu}'_k$ and $\hat{\nu}'_k$ are similar. Define $\bar{M}$ to be the $d_u \times K$ matrix whose $k$-th column is $(\bar{\nu}'_k - \bar{\nu}_k)$.

We now describe a way to show that state space $\mathcal{S} = \N_0^{d_u} \times \mathcal{E}_c $ is irreducible for the reaction dynamics. For this to hold it is necessary that
\begin{align}
\label{colspancond2}
\textnormal{Colspan}_{\N_0}( \bar{M} ) = \Z^{d_u}.
\end{align}
This condition can be checked by verifying that $\textnormal{Rank}(\bar{M}) =d_u$, $\textnormal{Colspan}_{\Z}( \bar{M} ) = \Z^{d_u}$ and the LFP corresponding to 
\begin{align}
\label{lfp2}
\mathcal{F}_2= \left\{ v \in \R^{K} : \bar{M} v = \bar{0}_{d_u} \textnormal{ and } v \geq \bar{1}_K \right\}
\end{align}
has a solution. Assuming \eqref{colspancond2}, the irreducibility of $\N_0^{d_u} \times \mathcal{E}_c$ can be proved by arranging the unconserved species into levels as before. However the description of levels gets more complicated because of the presence of conserved species.

For any group of unconserved species $A \subset \mathcal{D}_u$ and any $e \in \mathcal{E}_c$ define $\mathcal{K}(A,e) = \{ k \in \mathcal{K}  :  \textnormal{supp}( \bar{\nu}_k) \subset A \textnormal{ and } e \geq \hat{\nu}_{k} \}$. This is the set of reactions which have a positive probability of firing when the molecules of species in $A$ are abundantly available, and when the dynamics of the conserved species is at state $e$. 
Suppose that the finite set $\mathcal{E}_c$ has $n_c$ elements. Then we can write it as $\mathcal{E}_c = \{e_1,\dots,e_{n_c}\}$. For any $A \subset \mathcal{D}_u $ we define a $n_c \times n_c$ matrix $Z(A)$ as 
\begin{align*}
Z_{ij}(A) = \left\{
\begin{tabular}{cl}
$1$ & if $e_j = e_i + \hat{\nu}'_k-\hat{\nu}_k$ \ for some $k \in \mathcal{K}(A,e_i)$ \\  
$0$ & otherwise.
\end{tabular} \right.
\end{align*}
Note that $Z_{ij}(A) =1$ if and only if the dynamics of the conserved species can reach $e_j$ from $e_i$ due to the firing of a single reaction in $\mathcal{K}(A,e_i)$. If we define $$\Omega(A) = (I_{n_c} + Z(A) )^{n_c-1} ,$$ then $\Omega_{ij}(A) > 0$ if and only if there exist $i_1,\dots,i_n \in \{1,\dots,n_c\}$ such that
\begin{align*}
Z_{i i_1 }(A) = Z_{i_1 i_2}(A) = \dots =  Z_{i_{n-1} i_n}(A) = Z_{i_{n} j}(A) = 1.
\end{align*}
We can define a relation on $\mathcal{E}_c$ as follows : $e_i \leftrightarrow_A e_j$ if and only if $\Omega_{ij}(A) = \Omega_{ji}(A) = 1$. This is an equivalence relation and hence we can partition $\mathcal{E}_c$ into $\eta(A)$ equivalence classes. An equivalence class $C$ is called \emph{closed} if for each $i,j \in \{1,\dots,n_c\}$, if $e_i \in C$ and $Z_{ij}(A) = 1$ then $e_j \in C$. An equivalence class is called \emph{open} if it is not closed.
Let $\mathcal{C}(A)$ be the collection of all closed equivalence classes corresponding to the relation $\leftrightarrow_A$. Each closed equivalence class consists of states of the conserved species that are accessible from each other given that the molecules of the unconserved species in $A$ are abundantly available. For the state space $\mathcal{S} = \N_0^{d_u} \times \mathcal{E}_c$ to be irreducible for the reaction dynamics, it is necessary that when all the unconserved species are abundantly available ($A = \mathcal{D}_u$), then the relation $\leftrightarrow_A$ induces only one closed equivalence class that covers the whole set $\mathcal{E}_c$. This necessary condition can be stated as $\eta(\mathcal{D}_u) = 1$.

We are now ready to classify our unconserved species into various levels. Let $H_0 = \emptyset$ and for each $l=1,2,\dots$ define
\begin{align*}
G_l  = & \left\{ i \in \mathcal{D}_u \backslash H_{l-1} : \textnormal{for each } C \in \mathcal{C}(H_{l-1}) \textnormal{ there exists a }  k \in \mathcal{K}(H_{l-1},C) \textnormal{ such that } i \in \textnormal{supp}(\nu'_k) \right\} 
\end{align*}
and $H_l = H_{l-1} \cup G_l$, where $\mathcal{K}(H_{l-1},C) = \cup_{e \in C}\mathcal{K}(H_{l-1},e)$. We say that a reaction network with structure $\mathcal{R}$ is \emph{exhaustive} if there exists a $l_0 \geq 1$ such that $H_{l_0} = \mathcal{D}_u$. Analogous to Lemma \ref{lem:level} we get the following result.
\begin{lemma}
\label{lem:level2} 
Suppose that a reaction network with structure $\mathcal{R}$ is exhaustive and $\eta(\mathcal{D}_u) = 1$. Then there exists a $x \in \N^{d_u}$ such that for any $e,f \in \mathcal{E}_c$ we have
\begin{align*}
 (\bar{0}_{d_u}, e) \stackrel{ \mathcal{R}^{*}  }{\longrightarrow} ( x , f).  
 \end{align*}
\end{lemma}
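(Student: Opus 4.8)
The plan is to run the same inductive, level-by-level construction that proved Lemma \ref{lem:level}, now keeping the conserved coordinate in play at every stage. As before I abbreviate $\stackrel{ \mathcal{R}^{*} }{\longrightarrow}$ by $\longrightarrow$. I would open with a monotonicity observation that ties the level filtration $H_0 \subset H_1 \subset \cdots$ to the closed-class structure: if $A \subset A'$ then the support constraint in the definition of $\mathcal{K}(A,e)$ only gets easier, so $\mathcal{K}(A,e) \subseteq \mathcal{K}(A',e)$ for every $e$, whence $Z(A) \leq Z(A')$ entrywise and $\leftrightarrow_{A'}$ is a coarsening of $\leftrightarrow_A$. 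In particular every closed class of $\mathcal{C}(A')$ is a union of classes of $\leftrightarrow_A$, and conserved-state reachability can only grow as unconserved species are added. This is the compatibility that lets the induction propagate across levels.

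Next I would fix the inductive invariant. Call level $l$ \emph{satisfiable} if for every $r \in \N_0^{d_u}$ with $\textnormal{supp}(r) \subset H_l$ there is an $x \geq r$ with $\textnormal{supp}(x) \subseteq H_l$ such that, for every $e \in \mathcal{E}_c$ and every $f$ lying in a closed class of $\mathcal{C}(H_l)$ accessible from $e$, one has $(\bar{0}_{d_u}, e) \longrightarrow (x, f)$. For the base case $l=1$, reactions in $\mathcal{K}(\emptyset,\cdot)$ have $\bar{\nu}_k = \bar{0}_{d_u}$, so they consume no unconserved molecules; starting from any $e$ the conserved dynamics driven by these reactions must settle into some closed class $C \in \mathcal{C}(H_0)$, and the defining clause of $G_1$ guarantees that every species of $G_1$ can be produced from that $C$ at no unconserved cost, giving satisfiability of level $1$. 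For the step, assuming level $l-1$ satisfiable, I would first drive the $H_{l-1}$-species to a large abundance and land in some closed class $C' \in \mathcal{C}(H_{l-1})$; the definition of $G_l$ then furnishes, for that same $C'$, a reaction producing each $i \in G_l$ while consuming only the now-abundant $H_{l-1}$-species, so I can stock up all of $G_l$. Invoking the monotonicity observation to pass from the $\mathcal{C}(H_{l-1})$-picture to the $\mathcal{C}(H_l)$-picture then yields satisfiability of level $l$.

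Taking $l = l_0$, exhaustiveness gives $H_{l_0} = \mathcal{D}_u$, and the hypothesis $\eta(\mathcal{D}_u) = 1$ forces the unique equivalence class of $\leftrightarrow_{\mathcal{D}_u}$ to be closed and to exhaust $\mathcal{E}_c$. Hence once all unconserved species are abundant the conserved state can be moved from any value to any $f$, which removes the ``accessible from $e$'' qualifier and produces, for each pair $(e,f)$, a reachable state $(x_{e,f}, f)$ with $x_{e,f}$ large.

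The delicate remaining point, and the one I expect to be the main obstacle, is to replace the family $\{x_{e,f}\}$ by a single $x$ valid for all of the finitely many pairs $(e,f)$. The difficulty is genuine: the conserved-state navigation uses path-dependent firing sequences, so the terminal unconserved counts differ from pair to pair. I would resolve this by first using \eqref{colspancond2} together with the single-class conclusion above to show that some abundant region $\{(y,e) : y \geq x_*,\ e \in \mathcal{E}_c\}$ is internally accessible (the conservation-case analogue of the estimate from \cite{Craciun} that all sufficiently large states communicate), and then choosing any common $x \geq x_*$: for each $e$ the induction lands me somewhere in this region, and internal accessibility then carries me to $(x,f)$ for every $f$. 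Should region-accessibility prove heavier than needed here, the alternative is a direct padding argument, appending conserved-neutral reaction cycles (closed loops within the single class $\mathcal{E}_c$ that return the conserved state to $f$) together with the surjectivity in \eqref{colspancond2} to raise the unconserved coordinates to the common target while holding $f$ fixed. Either way, reconciling the conservation constraint with the demand for one uniform basepoint $x$ is where the real work lies; the level induction itself is a routine adaptation of Lemma \ref{lem:level}.
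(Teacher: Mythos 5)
Your proposal follows essentially the same route as the paper: the paper's own proof is only a two-sentence sketch stating that the argument of Lemma \ref{lem:level} carries over, with the single modification that the reactions producing $G_l$ must be chosen according to the current conserved state, which eventually gets trapped in a closed class of $\leftrightarrow_{H_{l-1}}$ --- exactly your level induction plus the monotonicity of closed classes and the endgame via $\eta(\mathcal{D}_u)=1$. Where you go beyond the paper is in isolating the uniform-basepoint issue (one $x$ serving all finitely many pairs $(e,f)$); the paper's sketch is silent on this, and it is a genuine subtlety since the terminal unconserved counts are path-dependent. One caveat on your proposed fixes: both the region-accessibility argument and the padding argument invoke \eqref{colspancond2}, which is \emph{not} among the hypotheses of the lemma as stated (only exhaustiveness and $\eta(\mathcal{D}_u)=1$ are), although it is assumed in Theorem \ref{thm:irredconsrels} where the lemma is applied; so either the lemma should be read as implicitly carrying that hypothesis, or one should settle for the weaker (and for the theorem, sufficient) conclusion that for each pair $(e,f)$ some $x_{e,f}\geq r$ works, with the common basepoint then supplied by the communicating large region.
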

\begin{proof}
Observe that for any $A \subset \mathcal{D}_u$, if the dynamics of the conserved species is at a state which is inside an open equivalence class of $\leftrightarrow_A$, then this dynamics will reach a closed equivalence class after a finite number of transitions. The proof of this lemma is essentially the same as the proof of Lemma \ref{lem:level}. The only difference is that to produce the species in $G_l$ one has to choose reactions based on the current state of the conserved species, which varies due to transitions inside $\mathcal{E}_c$, but eventually gets trapped inside a closed equivalence class of $\leftrightarrow_{H_{l-1}}$.
\end{proof}

Defining $ \mathcal{R}_{\textnormal{inv}}$ as before, we get our main result for checking the irreducibility of 
$\mathcal{S} = \N^{d_u}_0 \times \mathcal{E}_c$.  
\begin{theorem}
\label{thm:irredconsrels}
Suppose $\textnormal{Rank}(\bar{M}) = d_u$, $\textnormal{Colspan}_{\Z}( \bar{M} )  = \Z^{d_u}$, $\eta(\mathcal{D}_u) = 1$ and the LFP corresponding to $\mathcal{F}_2$ (see \eqref{lfp2}) has a solution. Also assume that reaction networks with structures $\mathcal{R}$ and $\mathcal{R}_{ \textnormal{inv} }$ are exhaustive. Then the state space $\mathcal{S} = \N^{d_u}_0 \times \mathcal{E}_c$ is irreducible under the reaction dynamics.
\end{theorem}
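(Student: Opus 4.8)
The plan is to follow the same three-part strategy that underlies Theorem \ref{thm:irrednocons}, adapted so that the conserved coordinates, which are confined to the finite set $\mathcal{E}_c$, are handled separately from the unconserved ones. First I would record the linear-algebraic preliminary: exactly as in the no-conservation case (Theorem 3.4 in \cite{Craciun}, Theorem 3.6 in \cite{Davis}, and the Hermite normal form), the hypotheses $\textnormal{Rank}(\bar{M}) = d_u$, $\textnormal{Colspan}_{\Z}(\bar{M}) = \Z^{d_u}$, and solvability of the LFP $\mathcal{F}_2$ (see \eqref{lfp2}) together yield condition \eqref{colspancond2}, namely $\textnormal{Colspan}_{\N_0}(\bar{M}) = \Z^{d_u}$. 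The argument is verbatim the one used for $M$, with $\bar{M}$ in place of $M$ and $d_u$ in place of $d$, so this step is routine.

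The core of the proof is to show that a suitable upper region is irreducible. Concretely, I would invoke the analysis of \cite{Craciun} to produce a vector $z_0 \in \N^{d_u}$ for which the region $\mathcal{U} = \{ (z,e) : z \geq z_0, \ e \in \mathcal{E}_c \}$ consists of states that are all accessible from one another. This is the step I expect to be the main obstacle, because here the two hypotheses must be combined in a coupled way: \eqref{colspancond2} guarantees that the \emph{unconserved} coordinates can realize any net integer displacement once they are abundant, while $\eta(\mathcal{D}_u) = 1$ guarantees that, when all unconserved species are abundantly available (the case $A = \mathcal{D}_u$), the conserved coordinates form a single closed equivalence class of $\leftrightarrow_{\mathcal{D}_u}$ covering all of $\mathcal{E}_c$. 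The difficulty is that a single reaction may move both blocks of coordinates simultaneously, so one must argue that inside $\mathcal{U}$ these motions effectively decouple: one can steer the conserved part freely through $\mathcal{E}_c$ (using $\eta(\mathcal{D}_u)=1$) while correcting the resulting unconserved displacement back to the target (using \eqref{colspancond2}), all without ever depleting the unconserved species below the threshold needed to keep the relevant reactions active. This is precisely the product-space version of Theorem 3.8 in \cite{Craciun}, and I would either cite it directly or reproduce its accessibility bookkeeping on $\mathcal{U}$.

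Finally I would glue $\mathcal{U}$ to the rest of $\mathcal{S}$ by translation. The key observation is monotonicity: since $\lambda_k(x) > 0$ is equivalent to $x \geq \nu_k$, adding unconserved molecules never disables a reaction, so $(z,e) \stackrel{ \mathcal{R}^{*} }{\longrightarrow} (z',e')$ implies $(z+w,e) \stackrel{ \mathcal{R}^{*} }{\longrightarrow} (z'+w,e')$ for every $w \in \N^{d_u}_0$. Applying Lemma \ref{lem:level2} to $\mathcal{R}$ gives an $x \in \N^{d_u}$ with $(\bar{0}_{d_u}, e) \stackrel{ \mathcal{R}^{*} }{\longrightarrow} (x,f)$ for all $e,f \in \mathcal{E}_c$; the satisfiability argument underlying that lemma (cf. the proof of Lemma \ref{lem:level}) lets me take $x \geq z_0$. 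Applying Lemma \ref{lem:level2} to the exhaustive inverse network $\mathcal{R}_{\textnormal{inv}}$ and using that $u \stackrel{ \mathcal{R}^{*}_{\textnormal{inv}} }{\longrightarrow} v$ is equivalent to $v \stackrel{ \mathcal{R}^{*} }{\longrightarrow} u$ gives an $x' \geq z_0$ with $(x',f) \stackrel{ \mathcal{R}^{*} }{\longrightarrow} (\bar{0}_{d_u}, e)$ for all $e,f$. Now for arbitrary states $(a,e_1)$ and $(b,e_2)$ in $\mathcal{S}$, translation by $a$ gives $(a,e_1) \stackrel{ \mathcal{R}^{*} }{\longrightarrow} (x+a, f_1)$ and translation by $b$ gives $(x'+b, f_2) \stackrel{ \mathcal{R}^{*} }{\longrightarrow} (b, e_2)$, with both intermediate states lying in $\mathcal{U}$ because $x+a \geq z_0$ and $x'+b \geq z_0$. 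Irreducibility of $\mathcal{U}$ then supplies $(x+a,f_1) \stackrel{ \mathcal{R}^{*} }{\longrightarrow} (x'+b,f_2)$, and chaining the three relations shows $(a,e_1) \stackrel{ \mathcal{R}^{*} }{\longrightarrow} (b,e_2)$, which is the desired irreducibility of $\mathcal{S} = \N^{d_u}_0 \times \mathcal{E}_c$.
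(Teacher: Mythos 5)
Your proposal is correct and follows essentially the same route the paper intends: establish \eqref{colspancond2} from the rank, Hermite-form, and LFP hypotheses exactly as in the unconserved case, invoke the analysis of \cite{Craciun} for irreducibility of an upper region (now a product with $\mathcal{E}_c$, using $\eta(\mathcal{D}_u)=1$), and glue via Lemma \ref{lem:level2} applied to $\mathcal{R}$ and $\mathcal{R}_{\textnormal{inv}}$ together with monotone translation in the unconserved coordinates. The paper leaves this argument implicit ("we get our main result"), so your write-up, including the explicit identification of the coupled upper-region step as the nontrivial point, is if anything more detailed than the source.
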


\subsection{Checking the negative drift condition} \label{sec:negdrift}  
Suppose that the state space of the form $\mathcal{S} = \N^{d_u}_0 \times \mathcal{E}_c$ has been shown to be irreducible under the reaction dynamics, where the set $\mathcal{E}_c$ may be empty. This covers both the situations discussed in Section \ref{sec:irred}. We also assume that $\sum_{i =1}^d \nu_{ik} \leq 2$ for each $k \in \mathcal{K}$. This implies that all the reactions are either \emph{constitutive} ($\emptyset \longrightarrow \star $), \emph{unary} ($S_i \longrightarrow \star $) or \emph{binary} ($S_i + S_j \longrightarrow \star $).  

Define a set of reactions by $\mathcal{K}_{\textnormal{unr}} = \{ k \in \mathcal{K} : \sum_{i =1}^d \nu_{ik} =1 \textnormal{ and } \textnormal{supp}(\nu_{k} ) \subset \mathcal{D}_u \} .$ Each reaction $k \in \mathcal{K}_{\textnormal{unr}}$ has the form $S_i \longrightarrow \star $ for some unconserved species $S_i$. For such a $k$ define a vector $a_k = (0,\dots,0,1,0,\dots,0) \in \N^{d_u}_0$, where the $1$ is at the $i$-th place.
 Let $\mathcal{K}_{\textnormal{bin}}$ be the set of all binary reactions $\mathcal{K}_{\textnormal{bin}} = \{ k \in \mathcal{K} : \sum_{i =1}^d \nu_{ik} =2 \} $
and let $K_q$ be the number of reactions in $\mathcal{K}_{\textnormal{bin}} $.

Recall that $\theta_k$ is the rate constant for the $k$-th reaction.
Define a $d_u \times d$ matrix by
\begin{align*}
A = \sum_{k \in \mathcal{K}_{\textnormal{unr}} } \theta_k a_k ( \nu'_k - \nu_k )^T.
\end{align*}
 Let $M_{q}$ be the $d \times K_q$ matrix whose set of  columns is $\{ (\nu'_k - \nu_k) : k \in \mathcal{K}_{\textnormal{bin}}   \}$. The next lemma will help us check \eqref{negativedriftcondition}. The conditions in this lemma are taken from Proposition 10 in \cite{Ourpaper}. 
\begin{lemma}\label{lem:negdrift}
Suppose there exists a vector $w \in \R^d$ whose first $d_u$ components are strictly positive, and $w$ satisfies $Aw < \bar{0}_{d_u}$ and $w^T M_q  = \bar{0}^T_{K_q}$. Then there exists a positive function $V$ on $\mathcal{S}$ along with constants $c_1,c_2 >0$ such that $V(x) \to \infty$ as $\|x\| \to \infty$, and \eqref{negativedriftcondition} is satisfied for all $x  \in \mathcal{S} =\N^{d_u}_0 \times \mathcal{E}_c  $.
\end{lemma}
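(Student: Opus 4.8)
The plan is to take $V$ to be the \emph{linear} function $V(x) = \langle w, x \rangle + c_0$, where $c_0$ is a constant chosen large enough to make $V$ strictly positive on all of $\mathcal{S}$. This is possible because the conserved coordinates of any $x \in \mathcal{S} = \N^{d_u}_0 \times \mathcal{E}_c$ lie in the finite set $\mathcal{E}_c$, so the contribution of the last $d_c$ components of $w$ to $\langle w, x\rangle$ is bounded below, while the first $d_u$ components (call this subvector $\bar{w}$, which is strictly positive by hypothesis) multiply the nonnegative integer coordinates $\bar{x}$. For the same reason, $\|x\| \to \infty$ forces $\|\bar{x}\| \to \infty$, and since $\bar{w} > \bar{0}_{d_u}$ we get $V(x) \to \infty$. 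It therefore remains only to verify the drift inequality \eqref{negativedriftcondition} for this $V$.

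Since $V$ is affine, the constant $c_0$ cancels in every difference $V(x+\nu'_k-\nu_k) - V(x) = \langle w, \nu'_k - \nu_k\rangle$, so the left-hand side of \eqref{negativedriftcondition} equals $\sum_{k=1}^K \lambda_k(x)\langle w, \nu'_k - \nu_k\rangle$. I would split this sum according to the three admissible reaction types (recall the standing assumption $\sum_i \nu_{ik} \le 2$). Each binary reaction $k \in \mathcal{K}_{\textnormal{bin}}$ contributes $\lambda_k(x)\langle w, \nu'_k - \nu_k\rangle$; but the columns of $M_q$ are precisely the vectors $\nu'_k - \nu_k$ for $k \in \mathcal{K}_{\textnormal{bin}}$, so the hypothesis $w^T M_q = \bar{0}^T_{K_q}$ yields $\langle w, \nu'_k - \nu_k\rangle = 0$ for every such $k$, and the entire binary block vanishes. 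This is the decisive step: the propensities of binary reactions grow \emph{quadratically} in $\bar{x}$, so without this cancellation the drift could be positive for large $\|x\|$ and no linear Lyapunov function could possibly work.

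For the unary reactions with an unconserved reactant, $k \in \mathcal{K}_{\textnormal{unr}}$, mass-action kinetics gives $\lambda_k(x) = \theta_k \langle a_k, \bar{x}\rangle$, so their combined contribution is $\sum_{k\in\mathcal{K}_{\textnormal{unr}}}\theta_k \langle a_k,\bar{x}\rangle\langle \nu'_k-\nu_k, w\rangle = \bar{x}^T A w$, directly by the definition of the matrix $A$. Every remaining reaction is either constitutive or a unary reaction whose single reactant is a conserved species; in both cases the propensity is bounded on $\mathcal{S}$ (a constant, respectively a linear function of a conserved coordinate confined to the finite set $\mathcal{E}_c$), so the total of these terms is bounded above by some constant $c_1'$. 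Hence the drift is at most $\bar{x}^T A w + c_1'$.

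Finally I would exploit $Aw < \bar{0}_{d_u}$: putting $\delta = \min_{1 \le i \le d_u}\bigl(-(Aw)_i\bigr) > 0$ gives $\bar{x}^T A w \le -\delta\,\|\bar{x}\|_1$. Because $V(x) = \langle \bar{w}, \bar{x}\rangle + (\text{bounded})$ with $\bar{w} > \bar{0}_{d_u}$, the quantity $\|\bar{x}\|_1$ exceeds a fixed positive multiple of $V(x)$ up to an additive constant, which turns $-\delta\|\bar{x}\|_1$ into $c_1'' - c_2 V(x)$ for suitable $c_1'', c_2 > 0$; combining with $c_1'$ yields \eqref{negativedriftcondition} with $c_1 = c_1' + c_1''$. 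I expect the main obstacle to be purely the structural bookkeeping in the splitting step, namely verifying that \emph{every} reaction with an unbounded propensity is either binary (annihilated by $w^T M_q = \bar{0}^T_{K_q}$) or unary on an unconserved species (feeding into $\bar{x}^T A w$); this exhaustive case-check is exactly what the degree bound $\sum_i \nu_{ik} \le 2$ together with the finiteness of $\mathcal{E}_c$ guarantees, after which the analytic estimates are routine.
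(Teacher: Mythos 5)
Your proof is correct and follows essentially the same route as the paper's: a linear Lyapunov function built from $w$, with the binary block annihilated by $w^T M_q = \bar{0}^T_{K_q}$, the unary--unconserved block collapsing to $\bar{x}^T A w < 0$ via the definition of $A$, and all remaining propensities bounded thanks to the finiteness of $\mathcal{E}_c$. The only cosmetic difference is how positivity of $V$ is restored: you add a constant $c_0$ to $\langle w, x\rangle$, whereas the paper replaces $w$ by $v = w + \alpha\gamma$ for a suitable multiple of the conservation vector $\gamma$ (which leaves $Av<\bar{0}_{d_u}$ and $v^T M_q=\bar{0}^T_{K_q}$ intact) and takes $V(x)=v^T x$; both devices exploit the boundedness of the conserved coordinates in the same way.
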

\begin{proof}
Let $\gamma \in \R^d_+$ be the vector that characterizes the conservation relation in the network. The last $d_c$ components of $\gamma$ are strictly positive, and since $\gamma$ satisfies \eqref{leftnullspacem} we have $A \gamma = \bar{0}_{d_u}$ and $\gamma^T M_q = \bar{0}^T_{K_q}$. Therefore we can choose an $\alpha >0$ such that the vector $v = w + \alpha \gamma$ has all strictly positive components and $v$ satisfies $Av < \bar{0}_{d_u}$ and $v^T M_q  = \bar{0}^T_{K_q}$. Define the function $V : \mathcal{S} \to (0,\infty)$ by $V(x) = v^T x$. 
The relation $v^T M_q = \bar{0}^T_{K_q}$ implies that for any $k \in \mathcal{K}_{\textnormal{bin}} $, we have 
$V(x +\nu'_k - \nu_k) -  V(x) = (\nu'_k - \nu_k)^T v = 0.$
For any $k \in \mathcal{K}_{\textnormal{unr}}$, $\lambda_k(x) = \theta_k x^T a_k$ and for any $k \in \mathcal{K}' = \{k \in \mathcal{K} : k \notin  \mathcal{K}_{\textnormal{unr}} \cup \mathcal{K}_{\textnormal{bin}}  \}$, the function $x \mapsto \lambda_k(x)$ is bounded on $\mathcal{S} $. Therefore there exists a $c_1 >0$ such that for all $x  = (x_1,x_2)=\N^{d_u}_0 \times \mathcal{E}_c $, the left side of \eqref{negativedriftcondition} is less than
\begin{align*}
 c_1 +  \sum_{k \in \mathcal{K}_{\textnormal{unr}} } \theta_k x_1^T  a_k ( \nu'_k - \nu_k )^Tv = c_1 +  x_1^T Av.
\end{align*}
Since $Av < \bar{0}_{d_u}$ and $\mathcal{E}_c$ is finite, we can find a $c_2 >0$ such that \eqref{negativedriftcondition} is satisfied for all $x  \in \mathcal{S}$. This completes the proof of the lemma.
\end{proof}

Define a $(2d_u) \times d$ matrix by $B = [-I_{d_u}  \ N  ]$, where $N$ is the $d_u \times d_c$ matrix of zeroes. 
Observe that a vector $w$ satisfying the conditions of Lemma \ref{lem:negdrift} exists if and only if the LFP corresponding to the set
\begin{align*}
\mathcal{F}_3 = \left\{ v \in \R^d :  \left[ \begin{array}{c} A \\B  \end{array} \right] v \leq - \left[ \begin{array}{c} \bar{1}_{d_u} \\ \bar{1}_{d_u}  \end{array} \right] \textnormal{ and } M^T_q v = \bar{0}_{K_q} \right\} 
\end{align*}
has a solution. This solution, Lemma \ref{lem:negdrift} and Theorem 7.1 in \cite{Meyn1} prove the ergodicity of the reaction dynamics, giving us our last result.
\begin{theorem}
\label{thm:ergodic}
Assume that the state space $\mathcal{S} = \N^{d_u}_0 \times \mathcal{E}_c$ is irreducible for the reaction dynamics and the LFP corresponding to $\mathcal{F}_3$ has a solution. Then the relation \eqref{ergodicconvergence} holds and the stochastic reaction dynamics is ergodic.
\end{theorem}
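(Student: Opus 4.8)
The plan is to reduce the theorem to the already-proved Lemma \ref{lem:negdrift} together with the Foster--Lyapunov stability result Theorem 7.1 of \cite{Meyn1}, so that the only genuine work is (i) matching feasibility of $\mathcal{F}_3$ to the hypotheses of that lemma and (ii) checking that the cited abstract theorem applies to our continuous-time chain.

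First I would unpack the block inequality defining $\mathcal{F}_3$. Since $B = [\,-I_{d_u}\ \ N\,]$ with $N$ the zero block, writing $v = (v_1,\dots,v_d)$ the constraint $Bv \le -\bar{1}_{d_u}$ reads $-(v_1,\dots,v_{d_u}) \le -\bar{1}_{d_u}$, i.e. the first $d_u$ components satisfy $v_i \ge 1 > 0$; the block $Av \le -\bar{1}_{d_u}$ gives $Av < \bar{0}_{d_u}$; and $M^T_q v = \bar{0}_{K_q}$ is the same as $v^T M_q = \bar{0}^T_{K_q}$. Hence any $v \in \mathcal{F}_3$ is a vector $w$ meeting all the hypotheses of Lemma \ref{lem:negdrift}. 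Conversely, if such a $w$ exists then, since its first $d_u$ components are strictly positive and $Aw < \bar{0}_{d_u}$ strictly, scaling $w \mapsto \alpha w$ for $\alpha>0$ large enough keeps $M^T_q(\alpha w) = \bar{0}_{K_q}$ while forcing the first $d_u$ components to be $\ge 1$ and $A(\alpha w) \le -\bar{1}_{d_u}$; thus $\alpha w \in \mathcal{F}_3$. This is the equivalence asserted just before the statement, so solvability of $\mathcal{F}_3$ yields a vector $w$ satisfying the lemma's conditions.

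Next I would feed this $w$ into Lemma \ref{lem:negdrift}, obtaining a positive function $V$ on $\mathcal{S}$, constants $c_1,c_2>0$ with $V(x)\to\infty$ as $\|x\|\to\infty$, and the drift inequality \eqref{negativedriftcondition} for every $x\in\mathcal{S}$. Writing $\mathcal{A}$ for the generator of $(X(t))_{t\ge 0}$, the left-hand side of \eqref{negativedriftcondition} is exactly $\mathcal{A}V(x)$, so the lemma delivers the geometric-drift (Foster--Lyapunov) condition $\mathcal{A}V \le c_1 - c_2 V$ on all of $\mathcal{S}$. I would then invoke Theorem 7.1 of \cite{Meyn1}: the assumed irreducibility of $\mathcal{S}$ means all states communicate, which for a countable-state chain makes every finite subset petite; combined with the norm-like Lyapunov function $V$ and the geometric drift, that theorem produces a unique stationary distribution $\pi$ together with exponentially fast total-variation convergence of $p_x(t,\cdot)$ to $\pi$ uniformly in the required sense, which is precisely \eqref{ergodicconvergence}, establishing ergodicity.

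I do not anticipate difficulty in the linear algebra; the point needing the most care is confirming that the hypotheses of the abstract stability theorem are genuinely met in the reaction-network setting. Specifically, I would verify that the drift inequality itself rules out explosion (so that $(X(t))_{t\ge 0}$ is an honest Markov process for all $t\ge 0$ and $p_x(t,\cdot)$ is a genuine probability distribution), and that irreducibility in our accessibility sense supplies the petiteness/regularity assumed in \cite{Meyn1}. Making that bridge between the concrete network framework and the abstract hypotheses of \cite{Meyn1} explicit is the part I would write out most carefully.
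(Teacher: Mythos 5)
Your proposal is correct and follows essentially the same route as the paper: the paper's entire argument for this theorem is the observation that feasibility of $\mathcal{F}_3$ is equivalent to the existence of a vector $w$ satisfying the hypotheses of Lemma \ref{lem:negdrift}, whose conclusion combined with irreducibility and Theorem 7.1 of \cite{Meyn1} yields ergodicity. Your added details (the scaling argument for the converse direction of the equivalence, and the remarks on non-explosion and petiteness of finite sets) are sound elaborations of steps the paper leaves implicit.
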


\section{An Example} \label{sec:example}
To illustrate our procedure for checking ergodicity, we consider the example of the genetic oscillator described in \cite{Vilar}. It has $9$ species $\mathbf{S_1},\dots,\mathbf{S_9}$ and 16 reactions given in the table below. 
\begin{table}[h]
\label{ta:gen}
\begin{center}
\caption{List of reactions for genetic oscillator}\label{reactionnetwork}
\begin{tabular}{cl|cl}
No. & Reaction & No. & Reaction \\\hline
1 & $\mathbf{S_6} + \mathbf{S_2} \longrightarrow \mathbf{S}_7 $ & 9 & $\mathbf{S_2} \longrightarrow \emptyset $ \\
2 & $\mathbf{S}_7 \longrightarrow  \mathbf{S_6} + \mathbf{S_2} $ & 10 & $\mathbf{S_9} \longrightarrow \mathbf{S}_9 + \mathbf{S_3}  $ \\
3 & $\mathbf{S_8} + \mathbf{S_2} \longrightarrow \mathbf{S}_9 $ & 11 & $\mathbf{S_8} \longrightarrow \mathbf{S}_8 + \mathbf{S_3}  $ \\
4 & $\mathbf{S}_9 \longrightarrow   \mathbf{S_8} + \mathbf{S_2} $ & 12 & $ \mathbf{S_3} \longrightarrow \emptyset $ \\
5 & $\mathbf{S_7}  \longrightarrow \mathbf{S}_7+ \mathbf{S_1} $ & 13 & $\mathbf{S_3} \longrightarrow \mathbf{S}_3 + \mathbf{S_4}  $ \\
6 & $\mathbf{S_6}  \longrightarrow \mathbf{S}_6+ \mathbf{S_1}  $ & 14& $ \mathbf{S_4} \longrightarrow \emptyset $ \\
7 & $ \mathbf{S_1} \longrightarrow \emptyset$ & 15 & $\mathbf{S_2} + \mathbf{S_4} \longrightarrow \mathbf{S}_5$ \\
8 & $\mathbf{S_1} \longrightarrow \mathbf{S_1} + \mathbf{S}_2 $ & 16 & $\mathbf{S_5} \longrightarrow \mathbf{S}_4 $ \\
\hline
\end{tabular}
\end{center}
\end{table}
This network has an \emph{activator} gene, which may exist in \emph{bound} ($\mathbf{S}_6$) or \emph{unbound} ($\mathbf{S}_7$) form. Similarly there is a \emph{promoter} gene which may also exist in \emph{bound} ($\mathbf{S}_8$) or \emph{unbound} form ($\mathbf{S}_9$). We assume that one copy of both the genes is present. Hence the sum of the species numbers of $\mathbf{S}_6$ and $\mathbf{S}_7$ is $1$. The same is true for species $\mathbf{S}_8$ and $\mathbf{S}_9$. Note that we have named the species in the model of \cite{Vilar} in such a way, so that the conserved species are at the end. Even though our procedure will work for any choice of rate constants ($\theta_k$), we set all of them to $1$ for convenience.

For this network, the set of unconserved species is $\mathcal{D}_u = \{1,2,3,4,5\}$, and there are two disjoint sets of conserved species $\mathcal{D}^{(1)}_{c} = \{6,7\}$ and $\mathcal{D}^{(2)}_{c} =\{8,9\}$. The dynamics of both sets of conserved species is over the set $\mathcal{E} = \{(0,1) , (1,0)\}$. To prove ergodicity we first need to show that the state space $\mathcal{S} = \N^5_0 \times \mathcal{E} \times \mathcal{E}$ is irreducible. For this we use Theorem \ref{thm:irredconsrels}, generalized to the case of having two disjoint sets of conserved species.

Consider the dynamics of species in $\mathcal{D}^{(1)}_{c} = \{6,7\}$ over $\mathcal{E} $. For any $A \subset \mathcal{D}_u$, the relation $\leftrightarrow_A$  induces only one closed equivalence class. This class is either $\mathcal{E}$ or $\{(1,0)\}$  depending on whether $2 \in A$ or not. By symmetry one can see that exactly the same holds true for the dynamics of species in $\mathcal{D}^{(2)}_{c} = \{8,9\}$. With this information we can arrange the unconserved species into levels as : $G_1 = \{1,3\}$, $G_2 =\{2,4\}$ and $G_3 = \{5\}$, which shows that the network is exhaustive. Similarly for the inverse network we can arrange the unconserved species into levels as : $G_1 = \{1,2,3,4\}$ and $G_2 =\{5\}$, thereby showing that the inverse network is also exhaustive. Other conditions of Theorem \ref{thm:irredconsrels} can be easily checked and hence this result proves that the state space $\mathcal{S} = \N^5_0 \times \mathcal{E} \times \mathcal{E}$ is irreducible.

Now we need to check the negative drift condition. Observe that $\mathcal{K}_{ \textnormal{unr}} = \{7,8,9,12,13,14,16\}$ and $\mathcal{K}_{ \textnormal{bin}} = \{1,3,15\}$. Constructing matrices $A,B$ and $M_q$ from Section \ref{sec:negdrift}, one can verify that the vector $$v= (2,1,2,1,2,-0.5,0.5,-0.5,0.5)$$ solves the feasibility problem for $\mathcal{F}_3$. Theorem \ref{thm:ergodic} proves that the reaction dynamics is ergodic with state space $\mathcal{S} = \N^5_0 \times \mathcal{E} \times \mathcal{E}$.



\end{document}